\newtheorem{theorem}{Theorem}
\newtheorem{corollary}[theorem]{Corollary}
\newtheorem{lemma}[theorem]{Lemma}
\theoremstyle{definition}
\newcommand{\F}{\mathcal{F}}
\renewcommand{\P}{\mathcal{P}}
\newcommand{\Q}{\mathcal{Q}}
\newcommand{\lpt}{\mathrm{lpt}}
\newcommand{\set}[1]{\left \{#1 \right\} }
\newcommand{\ceil}[1]{\left\lceil #1 \right\rceil}
\renewcommand{\emptyset}{\varnothing}
\newcommand{\comment}[1]{}
\newcommand{\lct}{\mathrm{lct}}
\begin{document}

\author[1]{James A. Long Jr.\thanks{jalong@mix.wvu.edu}}
\author[1]{Kevin G. Milans\thanks{milans@math.wvu.edu}}
\author[2]{Andrea Munaro\thanks{a.munaro@qub.ac.uk}}
\affil[1]{Department of Mathematics, West Virginia University, USA}
\affil[2]{School of Mathematics and Physics, Queen’s University Belfast, UK}

%\author{James A. Long Jr., Kevin G. Milans, Andrea Munaro}
\title{Sublinear Longest Path Transversals}
\date{\today}

\maketitle

\begin{abstract} We show that connected graphs admit sublinear longest path transversals. This improves an earlier result of Rautenbach and Sereni and is related to the fifty-year-old question of whether connected graphs admit longest path transversals of constant size. The same technique allows us to show that $2$-connected graphs admit sublinear longest cycle transversals. 
\end{abstract}

\section{Introduction}\label{intro}

A classical exercise in graph theory is to show that if $P$ and $Q$ are longest paths in a connected graph, then the vertex sets of $P$ and $Q$ have non-empty intersection (see \cite{West}, exercise 1.2.40).  In 1966, \citet{Gal68} asked whether this result could be strengthened to assert that the family of all longest paths in a connected graph $G$ has non-empty intersection.  It turns out the answer is no, as shown by \citet{Wal69} with a $25$-vertex counterexample.  A $12$-vertex counterexample, due to \citet{WV74} and independently \citet{Zam76}, is obtained from the Petersen graph by replacing one vertex $v$ with an independent set $\{v_1,v_2,v_3\}$ such that each $v_i$ becomes an endpoint of an edge incident to $v$ (see \Cref{fig:counter}).

\begin{figure}[h!]
\centering 
\includegraphics[scale=0.8]{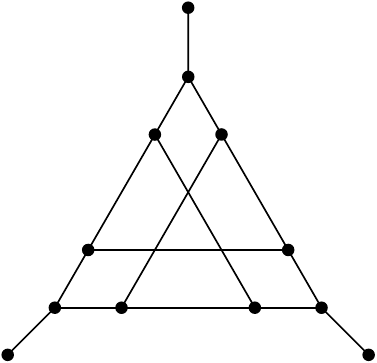}
\caption{The graph $G_0$: a $12$-vertex graph with $\lpt(G_0)=2$.}
\label{fig:counter}
\end{figure}

Since Gallai's question has a negative answer, a single vertex is generally insufficient to meet every longest path in a connected graph $G$.
%AM: I would remove "even when G is connected"
%KGM: Compromise?
A \emph{longest path transversal} in $G$ is a set of vertices that intersects every longest path.  Such a set is a transversal in the hypergraph on $V(G)$ whose edges are the vertex sets of longest paths in $G$.  Let $\lpt(G)$ be the minimum size of a longest path transversal in $G$.  The graph $G_0$ in \Cref{fig:counter} is a connected $12$-vertex graph with $\lpt(G_0)=2$. \citet{Gru74} constructed a connected $324$-vertex graph $G$ with $\lpt(G) = 3$.  Soon afterward, \citet{Zam76} found such a graph with $270$ vertices.  \Citet{Wal69} and \citet{Zam72} asked if $\lpt(G)$ is bounded for connected graphs $G$, and this remains open.  In fact, it is not known whether there is a connected graph $G$ with $\lpt(G)\ge 4$.
%AM: In the line below I'd say "Since a connected graph $G$ does not..."
%KGM: added connected, removed $G$
Let $G$ be a connected graph.  Since a connected graph does not contain vertex-disjoint longest paths, every partition of $V(G)$ into two sets has a part that contains no longest path in $G$, forcing the other part to be a longest path transversal.  Applying this to a partition of $V(G)$ into two parts of nearly equal size gives $\lpt(G)\le \ceil{n/2}$ when $G$ is an $n$-vertex connected graph.  It is not too difficult to improve this argument to obtain $\lpt(G)\le\ceil{n/4}$.  \citet{RS14} showed that $\lpt(G) \leq \lceil \frac{n}{4} - \frac{n^{2/3}}{90}\rceil$ for every connected $n$-vertex graph $G$.  We show that $\lpt(G)\le 8n^{3/4}$ when $G$ is an $n$-vertex connected graph, implying that connected graphs have sublinear longest path transversals.

Let $\lct(G)$ be the minimum size of a set of vertices $S$ such that $S$ intersects every longest cycle in $G$. Analogously to the case of longest paths in $1$-connected graphs, every pair of longest cycles in a $2$-connected graph intersect.  The Petersen graph $G$ is $2$-connected and $\lct(G)=2$.  With no connectivity assumptions, \citet{Tho78} showed that $\lct(G) \leq \lceil n/3\rceil$ for each $n$-vertex graph $G$.  The bound is sharp when $G$ is a disjoint union of triangles and nearly sharp in the $1$-connected case when $G$ is obtained from a star with $(n-1)/3$ leaves by replacing each leaf with a triangle.  On the other hand, \citet{RS14} proved that if $G$ is $2$-connected, then $\lct(G) \leq \lceil \frac{n}{3} - \frac{n^{2/3}}{36} \rceil$.  We show that $\lct(G) \leq 20n^{3/4}$ when $G$ is $2$-connected (\Cref{cor:lpt-lct}).  

The problems of finding small longest path transversals and small longest cycle transversals are special cases of a general problem that we aim to address.  Given a multigraph $F$ and an edge $e\in E(F)$ with endpoints $u$ and $v$, the \emph{subdivision operation} produces a new multigraph $F'$ in which $e$ is replaced by a path $uwv$ through a new vertex $w$ in $F'$.  A \emph{subdivision} of $F$ is a graph obtained from $F$ via a sequence of zero or more subdivision operations.  For a multigraph $R$ and a graph $G$, an $R$-subdivision in $G$ is a subgraph of $G$ isomorphic to a subdivision of $R$.  We ask for a small set of vertices in $G$ that intersects every
%AM: I think the above should be "intersects" as we look at intersections of sets. I think we should also stress that with "size" we always refer to the number of vertices. 
%KGM: Nice catch about the subject-verb agreement; should definitely be intersects.  As for size referring to number of vertices, this is explicit in the formal definitions in the next section, so I think this is OK here.  Also, maximizing number of vertices in a subdivision is equivalent to maximizing number of edges, so not much chance for the reader to go wrong anyway.
$R$-subdivision in $G$ of maximum size.  The cases of longest path transversals and longest cycle transversals arise as $R=P_2$ and $R=C_2$ (the multigraph $2$-vertex cycle), respectively.  We prove that for each connected multigraph $R$, if the family $\F$ of maximum $R$-subdivisions in $G$ is pairwise intersecting, then $\F$ admits a transversal of size at most $Cn^{3/4}$, where $C$ is a constant depending on $R$. 

\newcommand{\vep}{\varepsilon}
\newcommand{\mst}{\tau}

\section{Maximum subdivision transversals}\label{sec:mst}
Let $R$ be a multigraph.  Recall that an $R$-subdivision in $G$ is a subgraph of $G$ isomorphic to a subdivision of $R$, and a \emph{maximum $R$-subdivision} is an $R$-subdivision $F$ in $G$ that maximizes $|V(F)|$.  An \emph{$R$-transversal} of $G$ is a set of vertices intersecting each maximum $R$-subdivision.  Let $\mst_R(G)$ be the minimum size of an $R$-transversal in $G$.

Given sets of vertices $X$ and $Y$ of $G$, an \emph{$(X,Y)$-separator} is a set of vertices $S$ such that no path in $G-S$ has one endpoint in $X$ and the other endpoint in $Y$.  We allow an $(X,Y)$-separator to contain vertices in $X$ and $Y$.  An \emph{$(X,Y)$-connector} is a collection of vertex-disjoint paths $\set{P_1, \ldots, P_k}$ such that each $P_i$ has one endpoint in $X$, the other endpoint in $Y$, and the interior vertices of $P_i$ are outside $X \cup Y$.  A variant of Menger's Theorem asserts that the minimum size of an $(X,Y)$-separator equals the maximum size of an  $(X,Y)$-connector (see, e.g., Theorem 3.3.1 in \citep{Die}).

Our next result shows that when the maximum $R$-subdivisions in a graph $G$ pairwise intersect, $G$ has sublinear $R$-transversals.  We make no attempt to optimize the multiplicative constant $8$ or the dependence on $m$.

\begin{theorem}\label{thm:small-trans}
Let $R$ be a connected $m$-edge multigraph with $m\ge 1$ and let $G$ be an $n$-vertex graph.  If the maximum $R$-subdivisions in $G$ pairwise intersect, then $\mst_R(G) \le 8m^{5/4}n^{3/4}$.
\end{theorem}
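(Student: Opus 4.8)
The plan is to fix one maximum $R$-subdivision $F$ and extract from $V(F)$ a sublinear subset that still meets every maximum $R$-subdivision. Write $t=|V(F)|\le n$. Since $R$ is a connected $m$-edge multigraph, $F$ has at most $m+1$ branch vertices (the images of $V(R)$) and exactly $m$ internally disjoint branch paths (the images of the edges), so $F$ is covered by $m$ paths meeting only at branch vertices. Because the maximum $R$-subdivisions pairwise intersect and $F$ is one of them, every maximum $R$-subdivision meets $F$; hence $V(F)$ is already an $R$-transversal and $\mst_R(G)\le t\le n$. Fixing a spacing parameter $d$ to be optimized at the end, let $M$ consist of all branch vertices of $F$ together with every $d$-th vertex along each branch path, so that $|M|\le t/d+O(m)\le n/d+O(m)$.

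The crux is a rerouting lemma showing that $M$ can fail to be a transversal only because of short, local detours, which we can then block cheaply. Suppose $F'$ is a maximum $R$-subdivision avoiding $M$. Then $|V(F')|=|V(F)|=t$, and $F'$ meets $F$ only in the interiors of the length-$d$ segments cut out by $M$. Consider a maximal subpath $\pi$ of a branch path of $F'$ running between two consecutive vertices $x,y$ of $F'\cap F$, and compare $\pi$ with an $x$–$y$ path inside $F$. Replacing a sub-path of one maximum subdivision by a strictly longer, internally disjoint detour drawn from the other would produce an $R$-subdivision on more than $t$ vertices, contradicting maximality; so the two competing detours must have equal length and, pushing this further, $x$ and $y$ must lie in a common segment and $\pi$ must stay inside a ball of radius $O(d)$ about a branch vertex or a segment boundary. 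This is exactly where the separator/connector machinery enters: a Menger-type argument bounds the number of internally disjoint detours that can attach to a single segment and certifies that the relevant local separators are small.

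It then remains to block these confined configurations. Every unmarked meeting of a maximum $R$-subdivision with $F$ lives in a bounded-radius ball around one of the $O(m)$ branch vertices or segment endpoints, and within each such ball only $O(d)$ vertices per attachment pair can carry a detour; summing over the $O(m)$ centers and over the admissible attachment pairs contributes a term that is polynomial in $m$ and cubic in $d$ to the size of the augmented set $S=M\cup(\text{local blockers})$, which is then a genuine $R$-transversal. Thus $|S|$ is at most $n/d$ plus a term that is cubic in $d$ with a polynomial coefficient in $m$; minimizing over $d$ — the two contributions balance when $d$ is of order $n^{1/4}$, up to a factor depending on $m$ — yields $\mst_R(G)\le |S|\le 8m^{5/4}n^{3/4}$.

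The main obstacle is the rerouting lemma together with its bookkeeping. One must verify that any meeting of a second maximum $R$-subdivision with $F$ outside $M$ genuinely permits a length-increasing splice that remains a valid subdivision of $R$ and stays internally disjoint from the rest of $F'$, unless the meeting is trapped in a small local region; and one must then control, uniformly over all $m$ branch paths and all at most $m+1$ branch vertices, and in the presence of parallel edges and high-degree branch vertices of $R$, how many vertices are needed to block every such trap. It is precisely this multigraph bookkeeping that inflates the dependence on $m$, whereas the single-parameter optimization over the spacing $d$ against a cubic-in-$d$ error term is what produces the exponent $3/4$.
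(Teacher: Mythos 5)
Your plan hinges on a ``rerouting lemma'' asserting that any maximum $R$-subdivision $F'$ avoiding the marked set $M$ must meet $F$ only in small local regions near branch vertices or segment boundaries, so that a few extra ``local blockers'' finish the job. This is the step that fails, and it is not a repairable technicality: the pairwise-intersection hypothesis only guarantees that $F'$ meets $F$ somewhere, and that somewhere can be a \emph{single} unmarked vertex $v$ in the middle of a long segment, with the rest of $F'$ living outside $V(F)$. Already for $R=P_2$ (longest paths), two longest paths $A_1vA_2$ and $B_1vB_2$ crossing at one interior vertex $v$ satisfy $|A_1|+|B_1|=|A_2|+|B_2|=L$ with no contradiction to maximality, so no exchange argument forces $v$ to lie near a mark; the set of such crossing points ranges over essentially all of $V(F)$. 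Even for genuine ``detours'' $\pi$ between consecutive intersection points $x,y$, your exchange only applies when $x$ and $y$ lie on the same branch path of $F$ and the competing $F$-subpath is internally disjoint from the rest of $F'$; the assertion that one can ``push this further'' to confine $x,y$ to a common segment is exactly the unproved (and false) heart of the argument. Consequently the claimed $O(m^{\text{poly}})\cdot d^{3}$ blocking term has no mechanism behind it, and the optimization $n/d\sim d^{3}$ is reverse-engineered from the target bound.

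The paper avoids this trap by never trying to sparsify a single fixed subdivision. It runs an iterative peeling process: whenever a small $(V(P_1),V(P_2))$- or $(V(C),V(F))$-separator exists, that separator is added to a growing partial transversal $Y$ and the host graph shrinks; when no small separator exists, Menger supplies many disjoint connector paths, and the transversal core is taken to be a \emph{cycle} $C$ that is longest subject to a size cap of order $\vep n$. The exchange argument is then run between $C$ and a hypothetical maximum subdivision $F$ disjoint from $C$: by pigeonhole, two short connector paths attach to the same branch path $Q_e$ of $F$ with a short subpath $Q_0$ between their feet, and one either enlarges $F$ (contradicting maximality) or enlarges $C$ (contradicting the extremal choice of $C$ under the cap), yielding a numerical contradiction. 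The cycle structure is what makes every swap legal, which is precisely what your path-versus-path swaps lack. If you want to salvage your outline, you would need to replace the fixed $F$ by an object against which every avoiding subdivision is forced into an exchangeable configuration; that is what the capped longest cycle accomplishes.
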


\begin{proof}
Let $m=|E(R)|$ and let $\vep = 2(m/n)^{1/4}$.  We may assume that $m\le n$, since otherwise we may take $V(G)$ as our $R$-transversal.  Let $\F$ be the family of maximum $R$-subdivisions in $G$.  An \emph{$\vep$-partial transversal} is a triple $(H,X,Y)$ such that $H$ is a subgraph of $G$, $X=V(G)-V(H)$, $Y \subseteq X$ with $|Y|\le \vep|X|$, and each $F\in\F$ is a subgraph of $H$ or contains a vertex in $Y$.  Given an $\vep$-partial transversal $(H,X,Y)$, we either obtain an $\vep$-partial transversal $(H',X',Y')$ with $|V(H')|<|V(H)|$ or we produce an $R$-transversal with at most $8m^{5/4}n^{3/4}$ vertices.  Starting with $(H,X,Y)=(G,\emptyset,\emptyset)$ and iterating gives the result.

Let $(H,X,Y)$ be an $\vep$-partial transversal, and let $\F_0$ be the set of $F\in\F$ such that $F$ is a subgraph of $H$.  We may assume that $H$ contains vertex-disjoint paths $P_1$ and $P_2$ each of size $\ceil{\vep n}$. Otherwise, every path in $H$ has size less than $2\ceil{\vep n}$, and so each $F\in\F_0$ has at most $2m\ceil{\vep n}$ vertices. 
%AM: Reading this again, I think it might sound a bit confusing. One might wonder why not saying $2\ceil{\vep n} -1$.
%KGM: Compromise: say "less than $2\ceil{\vep n}$" instead of at most
Since $\F_0$ is pairwise intersecting, we have that $V(F)\cup Y$ is an $R$-transversal for each $F\in \F_0$.  It follows that $\mst_R(G)\le |Y| + 2m\ceil{\vep n} \le \vep n + 2m\ceil{\vep n} \le (2m+1)\vep n + 2m \le (2m+2)\vep n \le 4m\vep n = 8m^{5/4}n^{3/4}$. 

Suppose that $H$ has a $(V(P_1), V(P_2))$-separator $S$ of size at most $\vep^2 n$.  Since graphs in $\F_0$ are connected, each $F\in \F_0$ has a vertex in $S$ or is contained in some component of $H-S$.  Also, since $\F_0$ is pairwise intersecting, at most one component $H'$ of $H-S$ contains graphs in $\F_0$.  Since $S$ is a separator, $H'$ is disjoint from at least one of $\set{P_1,P_2}$.  With $X'=V(G)-V(H')$ and $Y'=Y\cup S$, we have $|X'| - |X| \ge \vep n$ and $|Y'| = |Y| + |S| \le \vep|X| + \vep^2 n \le \vep|X| + \vep (|X'|-|X|) \le \vep|X'|$.  It follows that $(H',X',Y')$ is an $\vep$-partial transversal.  Also $|V(H')|<|V(H)|$ since $|X'| > |X|$.

Otherwise, by Menger's Theorem, $H$ has a $(V(P_1),V(P_2))$-connector $\P$ with $|\P| \ge \vep^2 n$.  Let $\P'$ be the set of paths in $\P$ of size at most $2/\vep^2$.  Note that $|\P'| \ge |\P|/2$, or else $\P$ has at least $(\vep^2 n)/2$ paths of size more than $2/\vep^2$, contradicting that the paths in $\P$ are disjoint.  So we have $|\P'| \ge |\P|/2 \ge (\vep^2/2)n = 2m^{1/2}n^{1/2} \ge 2$.  Combining $P_1$ with two paths in $\P'$ whose endpoints in $V(P_1)$ are as far apart as possible and a segment of $P_2$ gives a cycle $C_0$ such that $(\vep^2/2)n \le |V(C_0)| \le  2\ceil{\vep n} + 4/\vep^2 - 4 \le 2\vep n + 4/\vep^2$, where the lower bound counts vertices in $V(P_1)\cap V(C_0)$ and the upper bound counts at most $2\ceil{\vep n}$ vertices in $(V(P_1)\cup V(P_2)) \cap V(C_0)$, at most $4/\vep^2$ vertices on the paths in $\P'$ linking $P_1$ and $P_2$, and observing that the $4$ endpoints of the linking paths are counted twice.  

Let $C$ be a longest cycle in $H$ subject to $|V(C)| \le 2\vep n + 4/\vep^2$, let $\ell = |V(C)|$,  and note that $\ell \ge |V(C_0)| \ge (\vep^2/2)n$.  If $V(C)$ intersects each subgraph in $\F_0$, then $Y\cup V(C)$ witnesses $\mst_R(G)\le |V(C)| + |Y| \le (2\vep n + 4/\vep^2) + \vep n = 3\vep n + (n/m)^{1/2} < 8m^{5/4}n^{3/4}$.  Otherwise, choose $F\in\F_0$ that is disjoint from $C$.  We may assume $|V(F)| \ge \ell$, or else $Y\cup V(F)$ witnesses that $\mst_R(G) \leq |V(F)| + |Y| < (2\vep n + 4/\vep^2) + \vep n < 8m^{5/4}n^{3/4}$.

If $H$ has a $(V(C),V(F))$-separator $T$ of size at most $\vep \ell$, then we obtain an $\vep$-partial transversal as follows.  At most one component $H'$ of $H-T$ contains graphs in $\F_0$.  Let $X'=V(G)-V(H')$ and let $Y'=Y\cup T$.  Since $H'$ is disjoint from one of $\set{C,F}$, it follows that $|X'| - |X| \ge \ell$.  We compute $|Y'| = |Y| + |T| \le \vep|X| + \vep \ell \le \vep|X| + \vep (|X'| - |X|) \le \vep |X'|$.  Hence $(H',X',Y')$ is an $\vep$-partial transversal with $|V(H')|<|V(H)|$.  

Otherwise, $H$ has a $(V(C), V(F))$-connector $\Q$ with $|\Q| \ge \vep \ell$.  We use $\Q$ to obtain a contradiction.  For $e\in E(R)$, let $Q_e$ be the path in $F$ corresponding to $e$, and let $\Q_e$ be the set of paths in $\Q$ which have an endpoint in $Q_e$.  Since $|E(R)| = m$, it follows that $|\Q_e| \ge |\Q|/m \ge \vep \ell/m$ for some edge $e\in E(R)$.  Let $\Q'$ be the set of paths in $\Q_e$ of size at most $\frac{2mn}{\vep \ell}$, and note that $|\Q'|\ge |\Q_e|/2 \ge \frac{\vep \ell}{2m}$, or else $\Q_e$ has at least $\frac{\vep \ell}{2m}$ paths of size more than $\frac{2mn}{\vep \ell}$, a contradiction.  The endpoints of paths in $\Q'$ divide $Q_e$ into $|\Q'|-1$ edge-disjoint subpaths.  Choose $Q_1,Q_2\in\Q'$ to minimize the length of such a subpath $Q_0$ of $Q_e$, and note that $Q_0$ has length at most $\frac{n-1}{|\Q'|-1}$; see Figure~\ref{fig:detours}. Since $m\le n$, we have $2m \le 2m^{3/4}n^{1/4} = \frac{\vep^3}{4} n \le \frac{\vep\ell}{2}$, and hence $\frac{n-1}{|\Q'| - 1} < \frac{n}{\frac{\vep \ell}{2m} - 1} = \frac{2mn}{\vep \ell - 2m} \le \frac{4mn}{\vep \ell}$.

\begin{figure}
\begin{center}
    \begin{tikzpicture}[
            vertex/.style={circle, inner sep=0pt, minimum size=1ex, fill=black!80}            
        ]
        
        \draw (0,2) circle (1cm) ++(1.1,0) node[anchor=west] {$C$} ;
        
        % draw the endpoints of Q1 and Q2 on C
        \path (0,2) ++(225:1cm) node[vertex] (Q1C) {} ;
        \path (0,2) ++(315:1cm) node[vertex] (Q2C) {} ;
        
        % draw F, plus the path Q_e
        % corresponding to an edge e in
        % R
        \begin{scope}[shift={(0,-2)}]
        \fill[black!15] (0, 0) ellipse (3.5cm and 1.25cm) ;
        
        \node[vertex] at (-2.5, 0) (QeA) {} ;
        
        \node[vertex] at (2.2, 0.4) (QeB) {} ;
        
        \draw (QeA) -- node[pos=0.5, anchor=north west] {$Q_e$} (QeB) ;
        
        \path (QeA) -- node[pos=0.15,vertex] (Q1F) {} (QeB) ;
        
        \path (QeA) --
        node[pos=0.4,vertex] (Q2F) {} (QeB) ;
        
        \node at (3, 0) {$F$} ;
        \end{scope}
        
        \draw (Q1F) edge[left] node {$Q_1$} (Q1C) ;
        \draw (Q2F) edge[right] node {$Q_2$} (Q2C) ;
        
        \path (Q1F) -- node[anchor=south] {$Q_0$} (Q2F) ;
        
        \draw[dashed,line width=1.5] (0,2) ++(225:1cm) arc (225:-45:1cm) node[pos=0.3,anchor=south east,inner sep=0pt] {$W$} ;
        
        \draw[line width=1.5] (Q2F) -- (Q1F) -- (Q1C) arc (225:315:1cm) -- (Q2F) ;
    \end{tikzpicture}
\end{center}
\caption{$(V(C),V(F))$-connector case.  The subpath $W$ of the cycle $C$ is dashed, and the cycle $D$ is displayed in bold.}\label{fig:detours}
\end{figure}

%KGM: Restore Oxford Comma
The endpoints of $Q_1$ and $Q_2$ on $C$ partition $C$ into two subpaths; let $W$ be the longer subpath. If $|E(W)| \ge |E(Q_0)|$, then we would obtain a larger $R$-subdivision by using $Q_1$, $W$, and $Q_2$ to bypass $Q_0$. Since $F$ is a maximum $R$-subdivision, we have $|E(W)|<|E(Q_0)|$.  Therefore using $Q_1$, $Q_0$, and $Q_2$ to bypass $W$ gives a cycle $D$ with $|E(D)|>|E(C)|$.  By the extremal choice of $C$, it follows that $|V(D)|> 2\vep n + 4/\vep^2$. On the other hand, $|V(D)| = |E(D)| \le \frac{\ell}{2} + |E(Q_1)| + |E(Q_0)| + |E(Q_2)| \le \frac{\ell}{2} + \frac{2mn}{\vep \ell} + \frac{4mn}{\vep \ell} + \frac{2mn}{\vep \ell} = \frac{\ell}{2} + \frac{8mn}{\vep \ell}$.

Therefore $2\vep n+\frac{4}{\vep^2} < |V(D)| \le \frac{\ell}{2} + \frac{8mn}{\vep \ell} \le \vep n + \frac{2}{\vep^2} + \frac{8mn}{\vep \ell} \le  \vep n + \frac{2}{\vep^2} + \frac{16m}{\vep^3}$, where the last inequality uses $\ell \ge (\vep^2/2) n$.  Simplifying gives $\vep n < \frac{16m}{\vep^3} - \frac{2}{\vep^2} < \frac{16m}{\vep^3}$, and this inequality is violated when $\vep \ge (16m/n)^{1/4}$.  
\end{proof}

Applying Theorem~\ref{thm:small-trans}, we obtain the following corollary.

\begin{corollary}\label{cor:lpt-lct}
Let $G$ be an $n$-vertex graph.  If $G$ is connected, then $\lpt(G)\le 8n^{3/4}$.  If $G$ is $2$-connected, then $\lct(G)\le 20n^{3/4}$.
\end{corollary}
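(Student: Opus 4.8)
The plan is to apply \Cref{thm:small-trans} to two carefully chosen connected multigraphs $R$, one encoding longest paths and one encoding longest cycles. For the first bound I take $R = P_2$, the $2$-vertex path, which has $m = 1$ edge. Every subdivision of $P_2$ is a path, so the maximum $R$-subdivisions of $G$ are exactly the longest paths of $G$, giving $\mst_R(G) = \lpt(G)$. The hypothesis of \Cref{thm:small-trans} requires the maximum $R$-subdivisions to pairwise intersect, and this is supplied by the classical fact (recalled at the start of the paper) that every two longest paths in a connected graph share a vertex. Plugging $m=1$ into the theorem yields $\lpt(G) = \mst_R(G) \le 8\cdot 1^{5/4}\cdot n^{3/4} = 8n^{3/4}$, which is the first claim.

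For the second bound I take $R$ to be the $2$-vertex cycle, i.e.\ the multigraph on two vertices joined by two parallel edges, which has $m = 2$ edges. Every subdivision of this multigraph is a cycle, so the maximum $R$-subdivisions are precisely the longest cycles of $G$ and $\mst_R(G) = \lct(G)$. When $G$ is $2$-connected, every two longest cycles intersect (the analogue for $2$-connected graphs of the longest-path fact, noted in the introduction), so the pairwise-intersecting hypothesis again holds. \Cref{thm:small-trans} with $m=2$ then gives $\lct(G) = \mst_R(G) \le 8\cdot 2^{5/4}\cdot n^{3/4}$.

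It remains only to clean up the numerical constant. Since $2^{5/4} = 2\cdot 2^{1/4} < 2.4$, we have $8\cdot 2^{5/4} < 19.2 < 20$, and therefore $\lct(G) \le 20 n^{3/4}$, completing the argument.

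Because everything reduces to feeding the correct $R$ into an already-established theorem, there is no substantial obstacle here; the result is genuinely a corollary. The only points that need care are identifying the right multigraph in each case and confirming that its maximum subdivisions coincide with longest paths (respectively longest cycles), together with checking the pairwise-intersection hypothesis, which in both cases is a known structural fact. The mildly subtle step is recognizing that the ``$2$-vertex cycle'' must be the double-edge multigraph so that $m=2$, and then tracking the constant $8\cdot 2^{5/4}$ down to the stated bound $20n^{3/4}$.
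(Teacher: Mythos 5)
Your proposal is correct and follows exactly the paper's own argument: apply \Cref{thm:small-trans} with $R=P_2$ (so $m=1$) using the pairwise intersection of longest paths in connected graphs, and with $R=C_2$ (so $m=2$) using the pairwise intersection of longest cycles in $2$-connected graphs, then bound $8\cdot 2^{5/4}\le 20$. Nothing further is needed.
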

\begin{proof}
When $R=P_2$, an $R$-transversal is a longest path transversal.  It is well known that if $G$ is connected, then the longest paths pairwise intersect.  By Theorem~\ref{thm:small-trans}, we have $\lpt(G)=\mst_R(G)\le 8n^{3/4}$.  

Similarly, when $R=C_2$, an $R$-transversal is a longest cycle transversal.  If $G$ is $2$-connected, then the longest cycles pairwise intersect.  By Theorem~\ref{thm:small-trans}, we have $\lct(G) = \mst_R(G)\le 8\cdot 2^{5/4}\cdot n^{3/4}\le 20n^{3/4}$.
\end{proof}

We do not know whether the assumption in  Theorem~\ref{thm:small-trans} that $R$ is connected is necessary to obtain sublinear $R$-transversals.  To obtain analogues of \Cref{cor:lpt-lct} for general $R$, we show that the maximum $R$-subdivisions pairwise intersect when the connectivity of $G$ is sufficiently large. Recall that a graph $G$ is \emph{$k$-connected} if $|V(G)| > k$ and $G - S$ is connected for each $S \subseteq V(G)$ with $|S| < k$. Moreover, the \emph{connectivity} of $G$, denoted $\kappa(G)$, is the maximum $k$ such that G is $k$-connected.
%AM: added def above
%KGM: being $k$-connected requires |V(G)| > k

\begin{lemma}\label{lem:high-connect-intersect}
Let $R$ be a connected $m$-edge multigraph with $m\ge 1$.  If $\kappa(G) > m^2$, then the maximum $R$-subdivisions in $G$ are pairwise intersecting.
\end{lemma}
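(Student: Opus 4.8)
The plan is to argue by contradiction. Assume $F_1$ and $F_2$ are vertex-disjoint maximum $R$-subdivisions and construct a strictly larger $R$-subdivision. (If $G$ contains no $R$-subdivision the conclusion is vacuous, so I assume at least one exists.) The first step is to show that \emph{every} maximum $R$-subdivision $F$ satisfies $|V(F)| > m^2$. Suppose not, so $|V(F)| \le m^2 < \kappa(G)$. Then $G$ has a vertex $w \notin V(F)$, and the fan version of Menger's Theorem provides $|V(F)|$ paths from $w$ to $V(F)$, disjoint except at $w$ and meeting $V(F)$ only in their endpoints; these reach \emph{every} vertex of $F$. Choosing any edge $ab$ of $F$ together with the two fan paths ending at $a$ and $b$, I replace $ab$ by the concatenation of these two paths through $w$; its interior avoids $F$, so the result merely further subdivides one edge-path of $F$ while gaining at least the vertex $w$, contradicting maximality. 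Hence $|V(F_1)|, |V(F_2)| > m^2$.

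Since $F_1$ and $F_2$ are disjoint with $|V(F_i)| \ge m^2+1$ and $\kappa(G) \ge m^2+1$, every $(V(F_1),V(F_2))$-separator has at least $m^2+1$ vertices: a separator $S$ with $|S| \le m^2$ would leave $G-S$ connected while a vertex of $F_1$ and a vertex of $F_2$ both survive, contradicting that $S$ separates them. Menger's Theorem then yields a $(V(F_1),V(F_2))$-connector $\Q$ with $|\Q| \ge m^2+1$. For an edge $e$ of $R$, write $Q_e$ for the corresponding path in a subdivision. Each path of $\Q$ attaches to $F_1$ on one of its $m$ edge-paths and to $F_2$ on one of its $m$ edge-paths (assigning a branch-vertex endpoint to any incident edge-path). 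As there are only $m^2$ such pairs, two paths $P, P' \in \Q$ attach to a common edge-path of $F_1$ and a common edge-path of $F_2$.

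Let $a_1, b_1$ and $a_2, b_2$ be the endpoints of $P, P'$ on the common edge-path of $F_1$ and of $F_2$, respectively; the four are distinct because connector paths are vertex-disjoint. Let $S_1$ and $S_2$ be the subpaths of these edge-paths joining $a_1$ to $b_1$ and $a_2$ to $b_2$. The detour obtained by going from $a_2$ along $P$ to $a_1$, along $S_1$ to $b_1$, and along $P'$ to $b_2$ meets $F_2$ only in $a_2, b_2$, so substituting it for $S_2$ produces a valid $R$-subdivision $F_2'$; symmetrically, routing $a_1 \to a_2 \to b_2 \to b_1$ via $P, S_2, P'$ and substituting for $S_1$ gives a valid $F_1'$. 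A short count shows the first substitution changes the vertex count by at least $(|E(S_1)|+1)-(|E(S_2)|-1)$ and the second by at least $(|E(S_2)|+1)-(|E(S_1)|-1)$; whichever of $|E(S_1)| \ge |E(S_2)|$ or $|E(S_1)| < |E(S_2)|$ holds makes one of these quantities positive, so one of $F_1', F_2'$ is strictly larger, contradicting maximality. The main obstacle is securing enough connector paths to force the pigeonhole collision, which is exactly why the largeness step (every maximum $R$-subdivision has more than $m^2$ vertices) is needed; the symmetric choice of which subdivision to reroute is what guarantees a genuine gain regardless of the relative lengths of $S_1$ and $S_2$.
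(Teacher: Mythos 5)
Your proof is correct and follows essentially the same route as the paper's: obtain a $(V(F_1),V(F_2))$-connector of size exceeding $m^2$ via Menger, pigeonhole the connector paths onto the $m^2$ ordered pairs of edge-paths, and then reroute whichever of the two subdivisions loses the length comparison to get a strictly larger one. Your preliminary step showing every maximum $R$-subdivision has more than $m^2$ vertices is a welcome addition: it explicitly justifies why no small $(V(F_1),V(F_2))$-separator (in particular, neither $V(F_1)$ nor $V(F_2)$ itself) can exist, a point the paper's one-line appeal to Menger leaves implicit.
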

\begin{proof}
Suppose for a contradiction that $G$ has disjoint maximum $R$-subdivisions $F_1$ and $F_2$, and let $k = |V(F_1)| = |V(F_2)|$.  By Menger's Theorem, there is an $(V(F_1),V(F_2))$-connector $\P$ with $|\P| = \min\{k,m^2+1\}$. 
%AM: Better to have the above with "at least"?
%KGM: Since we break down into cases depending on whether $|\P| = k$ or $|\P| = m^2 + 1$, we want equality here.
If $|\P| = k$, then every vertex in $F_1$ is an endpoint of a path in $\P$, and we obtain an $R$-subdivision of size more than $k$ by replacing an edge $uv\in E(F_1)$ with a path in $\P$ having $u$ as an endpoint, a path in $\P$ having $v$ as an endpoint, and an appropriate path in the connected subgraph $F_2$.

So we may assume $|\P|=m^2 + 1$.  For each $e\in E(R)$, let $F_i(e)$ be the path in $F_i$ corresponding to $e$.  Since $R$ has no isolated vertices, we may associate each $P\in\P$ with an ordered pair of edges $(e_1,e_2)\in (E(R))^2$ such that $P$ has its endpoint in $F_1$ in $F_1(e_1)$ and its endpoint in $F_2$ in $F_2(e_2)$. Since $|\P|>m^2$, some pair $(e_1,e_2)$ is associated with distinct paths $P,Q\in \P$.  Let $W_i$ be the subpath of $F_i(e_i)$ whose endpoints are in $V(P)\cup V(Q)$.  If $|E(W_1)|\ge |E(W_2)|$, then we modify $F_2$ to obtain a larger $R$-subdivision by using $P$, $W_1$, and $Q$ to bypass $W_2$.  Similarly, if $|E(W_2)| \ge |E(W_1)|$, then we modify $F_1$ to obtain a larger $R$-subdivision by using $P$, $W_2$, and $Q$ to bypass $W_1$.
\end{proof}

\begin{corollary}
Let $R$ be a connected $m$-edge multigraph.  If $G$ is an $n$-vertex graph with $\kappa(G)>m^2$, then $\mst_R(G)\le 8m^{5/4}n^{3/4}$.
\end{corollary}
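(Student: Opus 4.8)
The plan is to obtain this corollary by simply chaining the two results that immediately precede it, so that no genuinely new argument is required. The hypothesis $\kappa(G) > m^2$ is precisely the hypothesis of \Cref{lem:high-connect-intersect}, so I would first invoke that lemma to conclude that the maximum $R$-subdivisions of $G$ are pairwise intersecting. With pairwise intersection established, and since $R$ is connected, the hypotheses of \Cref{thm:small-trans} are satisfied, and applying it directly yields the desired bound $\mst_R(G)\le 8m^{5/4}n^{3/4}$. I expect the proof to be one or two sentences.

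The only point deserving a moment's care is the standing assumption $m\ge 1$ built into \Cref{thm:small-trans}. For $m\ge 1$ there is nothing further to verify. If one insists on the degenerate case $m=0$, then $R$ is a single vertex and every one-vertex subgraph of $G$ is a maximum $R$-subdivision; distinct such subgraphs fail to intersect once $n\ge 2$, so the chaining breaks down and the statement must be read with $m\ge 1$, exactly as in \Cref{thm:small-trans}. I would therefore carry the assumption $m\ge 1$ through silently, in line with the theorem.

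There is no real obstacle here: the substantive combinatorial work has already been carried out, namely the sublinear transversal bound under pairwise intersection in \Cref{thm:small-trans} and the Menger-based bypass argument of \Cref{lem:high-connect-intersect} showing that sufficiently high connectivity forces the maximum $R$-subdivisions to pairwise intersect. The corollary merely records the composition of these two facts, with $\kappa(G) > m^2$ serving as a convenient, purely connectivity-based sufficient condition that discharges the pairwise-intersection hypothesis of the theorem.
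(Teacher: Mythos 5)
Your proposal is correct and is exactly the intended argument: the paper states this corollary without proof precisely because it follows by chaining \Cref{lem:high-connect-intersect} (to get pairwise intersection of maximum $R$-subdivisions from $\kappa(G)>m^2$) with \Cref{thm:small-trans}. Your side remark that $m\ge 1$ must be carried over silently is a reasonable observation, and the composition is otherwise immediate.
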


As it is not known whether there exists a connected graph $G$ with $\lpt(G) > 3$, reducing the gap between our sublinear upper bound on $\lpt(G)$ and the constant lower bound remains a major open problem in the area of longest path transversals. 

\subsection*{Acknowledgement}
The authors greatly appreciate the careful comments of an anonymous referee.

\bibliographystyle{plainnat}
\bibliography{citations}

\end{document}